\DeclareMathAlphabet{\mathmybb}{U}{bbold}{m}{n}
\begin{document}

\newtheorem{thm}{Theorem}
\newtheorem{lem}[thm]{Lemma}
\newtheorem{claim}[thm]{Claim}
\newtheorem{cor}[thm]{Corollary}
\newtheorem{prop}[thm]{Proposition} 
\newtheorem{definition}[thm]{Definition}
\newtheorem{rem}[thm]{Remark} 
\newtheorem{question}[thm]{Open Question}
\newtheorem{conj}[thm]{Conjecture}
\newtheorem{prob}{Problem}

\newtheorem{lemma}[thm]{Lemma}

\newcommand{\GL}{\operatorname{GL}}
\newcommand{\SL}{\operatorname{SL}}
\newcommand{\lcm}{\operatorname{lcm}}
\newcommand{\ord}{\operatorname{ord}}
\newcommand{\Op}{\operatorname{Op}}
\newcommand{\Tr}{\operatorname{Tr}}
\newcommand{\Nm}{\operatorname{Nm}}

\numberwithin{equation}{section}
\numberwithin{thm}{section}
\numberwithin{table}{section}

\def\vol {{\mathrm{vol\,}}}
\def\squareforqed{\hbox{\rlap{$\sqcap$}$\sqcup$}}
\def\qed{\ifmmode\squareforqed\else{\unskip\nobreak\hfil
\penalty50\hskip1em\null\nobreak\hfil\squareforqed
\parfillskip=0pt\finalhyphendemerits=0\endgraf}\fi}

\def \ss{\mathsf{s}} 

\def \balpha{\bm{\alpha}}
\def \bbeta{\bm{\beta}}
\def \bgamma{\bm{\gamma}}
\def \blambda{\bm{\lambda}}
\def \bchi{\bm{\chi}}
\def \bphi{\bm{\varphi}}
\def \bpsi{\bm{\psi}}
\def \bomega{\bm{\omega}}
\def \btheta{\bm{\vartheta}}

\newcommand{\bfxi}{{\boldsymbol{\xi}}}
\newcommand{\bfrho}{{\boldsymbol{\rho}}}

\def\Kab{\sfK_\psi(a,b)}
\def\Kuv{\sfK_\psi(u,v)}
\def\SaUV{\cS_\psi(\balpha;\cU,\cV)}
\def\SaAV{\cS_\psi(\balpha;\cA,\cV)}

\def\SUV{\cS_\psi(\cU,\cV)}
\def\SAB{\cS_\psi(\cA,\cB)}

\def\Kmnp{\sfK_p(m,n)}

\def\KKap{\cH_p(a)}
\def\KKaq{\cH_q(a)}
\def\KKmnp{\cH_p(m,n)}
\def\KKmnq{\cH_q(m,n)}

\def\Klmnp{\sfK_p(\ell, m,n)}
\def\Klmnq{\sfK_q(\ell, m,n)}

\def \SALMNq {\cS_q(\balpha;\cL,\cI,\cJ)}
\def \SALMNp {\cS_p(\balpha;\cL,\cI,\cJ)}

\def \SACXMQX {\fS(\balpha,\bzeta, \bxi; M,Q,X)}

\def\SAMJp{\cS_p(\balpha;\cM,\cJ)}
\def\SAMJq{\cS_q(\balpha;\cM,\cJ)}
\def\SAqMJq{\cS_q(\balpha_q;\cM,\cJ)}
\def\SAJq{\cS_q(\balpha;\cJ)}
\def\SAqJq{\cS_q(\balpha_q;\cJ)}
\def\SAIJp{\cS_p(\balpha;\cI,\cJ)}
\def\SAIJq{\cS_q(\balpha;\cI,\cJ)}

\def\RIJp{\cR_p(\cI,\cJ)}
\def\RIJq{\cR_q(\cI,\cJ)}

\def\TWXJp{\cT_p(\bomega;\cX,\cJ)}
\def\TWXJq{\cT_q(\bomega;\cX,\cJ)}
\def\TWpXJp{\cT_p(\bomega_p;\cX,\cJ)}
\def\TWqXJq{\cT_q(\bomega_q;\cX,\cJ)}
\def\TWJq{\cT_q(\bomega;\cJ)}
\def\TWqJq{\cT_q(\bomega_q;\cJ)}

 \def \xbar{\overline x}
  \def \ybar{\overline y}

\def\cA{{\mathcal A}}
\def\cB{{\mathcal B}}
\def\cC{{\mathcal C}}
\def\cD{{\mathcal D}}
\def\cE{{\mathcal E}}
\def\cF{{\mathcal F}}
\def\cG{{\mathcal G}}
\def\cH{{\mathcal H}}
\def\cI{{\mathcal I}}
\def\cJ{{\mathcal J}}
\def\cK{{\mathcal K}}
\def\cL{{\mathcal L}}
\def\cM{{\mathcal M}}
\def\cN{{\mathcal N}}
\def\cO{{\mathcal O}}
\def\cP{{\mathcal P}}
\def\cQ{{\mathcal Q}}
\def\cR{{\mathcal R}}
\def\cS{{\mathcal S}}
\def\cT{{\mathcal T}}
\def\cU{{\mathcal U}}
\def\cV{{\mathcal V}}
\def\cW{{\mathcal W}}
\def\cX{{\mathcal X}}
\def\cY{{\mathcal Y}}
\def\cZ{{\mathcal Z}}
\def\Ker{{\mathrm{Ker}}}

\def\NmQR{N(m;Q,R)}
\def\VmQR{\cV(m;Q,R)}

\def\Xm{\cX_{p,m}}

\def \A {{\mathbb A}}
\def \B {{\mathbb A}}
\def \C {{\mathbb C}}
\def \F {{\mathbb F}}
\def \G {{\mathbb G}}
\def \L {{\mathbb L}}
\def \K {{\mathbb K}}
\def \PP {{\mathbb P}}
\def \Q {{\mathbb Q}}
\def \R {{\mathbb R}}
\def \Z {{\mathbb Z}}
\def \fS{\mathfrak S}
\def \fB{\mathfrak B}

\def\GL{\operatorname{GL}}
\def\SL{\operatorname{SL}}
\def\PGL{\operatorname{PGL}}
\def\PSL{\operatorname{PSL}}
\def\li{\operatorname{li}}
\def\sym{\operatorname{sym}}

\def\Mob{M{\"o}bius }

\def\fF{\EuScript{F}}
\def\M{\mathsf {M}}
\def\T{\mathsf {T}}

\def\e{{\mathbf{\,e}}}
\def\ep{{\mathbf{\,e}}_p}
\def\eq{{\mathbf{\,e}}_q}

\def\\{\cr}
\def\({\left(}
\def\){\right)}

\def\<{\left(\!\!\left(}
\def\>{\right)\!\!\right)}
\def\fl#1{\left\lfloor#1\right\rfloor}
\def\rf#1{\left\lceil#1\right\rceil}

\def\Tr{{\mathrm{Tr}}}
\def\Nm{{\mathrm{Nm}}}
\def\Im{{\mathrm{Im}}}

\def \oF {\overline \F}

\newcommand{\pfrac}[2]{{\left(\frac{#1}{#2}\right)}}

\def \Prob{{\mathrm {}}}
\def\e{\mathbf{e}}
\def\ep{{\mathbf{\,e}}_p}
\def\epp{{\mathbf{\,e}}_{p^2}}
\def\em{{\mathbf{\,e}}_m}

\def\Res{\mathrm{Res}}
\def\Orb{\mathrm{Orb}}

\def\vec#1{\mathbf{#1}}
\def \va{\vec{a}}
\def \vb{\vec{b}}
\def \vh{\vec{h}}
\def \vk{\vec{k}}
\def \vs{\vec{s}}
\def \vu{\vec{u}}
\def \vv{\vec{v}}
\def \vz{\vec{z}}
\def\flp#1{{\left\langle#1\right\rangle}_p}
\def\T {\mathsf {T}}

\def\sfG {\mathsf {G}}
\def\sfK {\mathsf {K}}

\def\mand{\qquad\mbox{and}\qquad}

\title[On bounds on Dedekind sums for subgroups]
{Pointwise and correlation bounds on Dedekind sums over small subgroups}

\author[B.  Borda]{Bence Borda}
\address{Institute of Analysis and Number Theory, Technische Universit{\"a}t Graz, 
Steyrergasse 30, 8010 Graz, Austria}
\email{ borda@math.tugraz.at}

\author[M. Munsch]{Marc Munsch}
\address{Institut Camille Jordan, Universit{\'e} Jean Monnet, 23, Rue du docteur Paul Michelon,
42 023 Saint-Etienne Cedex 2, France}
\email{marc.munsch@univ-st-etienne.fr}

\author[I. E. Shparlinski] {Igor E. Shparlinski}
\address{School of Mathematics and Statistics, University of New South Wales, Sydney, NSW 2052, Australia}
\email{igor.shparlinski@unsw.edu.au}

\begin{abstract}
We obtain new bounds, pointwise and on average, for Dedekind sums $\ss(\lambda,p)$ modulo a 
prime $p$ with $\lambda$ of small multiplicative order $d$ modulo $p$.  Assuming the infinitude of Mersenne primes,
the range of our results is optimal. Moreover, we relate high moments of $L(1,\chi)$ over subgroups of characters to some correlations of Dedekind sums, and use recent results of the second and third author to study these correlations.
\end{abstract}
\keywords{Dedekind sum, $L$-function,  multiplicative subgroup, continued fractions.}
\subjclass[2020]{11F20, 11J71, 11K38, 11M20}

\maketitle

\section{Introduction}

\subsection{Dedekind sums and moments of $L$-functions}
Given two integers $a$ and $b$ with 
\[
b \ge 1 \mand \gcd(a,b)=1,
\]
we define the {\it Dedekind sum\/} 
\[
\ss(a,b) = \sum_{c=1}^{b-1} \<\frac{c}{b}\>  \<\frac{ac}{b}\>, 
\]
where 
\[
\left(\!\left(\xi \right)\!\right) = \begin{cases}
 \{\xi\} - 1/2, & \text{if } \xi\in \R\backslash \Z,\\
0, & \text{if } \xi\in   \Z.
\end{cases}  
\]  

Historically, Dedekind sums appeared in the context of modular forms to describe the 
transformation formula for the Dedekind eta function~\cite{RGr}.
Since the seminal works of 
Vardi~\cite{Var1,Var2}, it is known that the distribution of the ratio  $\ss(a,b)/\log b$ converges to the Cauchy distribution when averaged over pairs $\gcd(a,b)=1$ and $1\leq b \leq B$, with $B\to \infty$,  see 
also~\cite{BaSh,Hick}. In particular, Dedekind sums are relatively small on average, but can also seldom take very large values which are responsible for the asymptotic of the moments, see, for instance,~\cite{CFKS, Gir1, Gir2} and references therein. Both the study of the distribution of values of Dedekind sums~\cite{ABH,BD} as well as the study of restricted averages of 
Dedekind sums~\cite{MST,MST2}  regained a lot of interest in recent years, in particular due to their applications to
the  analysis of some number theoretic algorithms. Let us also mention that Lemke Oliver and Soundararajan~\cite{L-OSound} have discovered a very interesting link between  Dedekind sums and the distribution of primes.  

We first observe that by the Cauchy--Schwarz inequality,   we have the following 
trivial bound:
\[
|\ss(a,b)| \le \ss(1,b) = \frac{(b-1)(b-2)}{12b} < \frac{b}{12} .
\]
Here we are interested in non-trivial bounds on the sums $\ss(\lambda,p)$ depending on the multiplicative order 
of $\lambda$  modulo a prime $p$. 
In particular, this question is   motivated by the connection 
 with the second moments of Dirichlet $L$-functions $L(1, \chi)$ over 
multiplicative characters $\chi$ of a given order, as established in~\cite{Lou1,Lou2,LoMu1,LoMu2}. A link 
between the size of $\ss(a,b)$ and the order of $a$ modulo $b$ was first established 
by Louboutin in \cite[Lemma~6]{Lou1}, which implies that for an odd 
\begin{equation}\label{eq:Bad b}
b= (a^d -1)/(a-1)
\end{equation}
 with an integer $a\ne 0, \pm 1$ and a prime $d$, we have
 $$
\ss(a,b) = \frac{(b-1)(b-a^2 -1)}{12 ab}  = O\(b^{1-1/(d-1)}\).
$$
It is easy to see that $d$ is the order of $a$ modulo $b$. 

More precisely, let  $ \F_p^*$  be 
the  multiplicative group of the finite field  $\F_p$  of $p$ elements, and let $\cX_p$ be
the group of multiplicative characters of  $\F_p^*$.  

Given an even integer $m$ with $m\mid p-1$  and  a multiplicative subgroup $\cG_m\subseteq \F_p^*$ of index $m$ and thus of order $(p-1)/m$, we denote by $\cX_{p,m}$ 
the group of multiplicative characters of  $\F_p^*$  which are trivial on $\cG_m$.
Furthermore, assuming that $(p-1)/m$ is odd, we consider the set of $m/2$ odd 
characters in $\cX_{p,m}$, that is, the set
\[
\cX_{p,m}^{-} = \left\{\chi \in \cX_{p,m}:~\chi(-1)=-1\right\}.
\] 
We also set 
\[
\cX_{p}^{-} = 
\cX_{p,p-1}^{-} .
\]  
We now define the moments
\[
M_{\nu}^{-}(p,m)
=\frac{2}{m} \sum_{\chi \in \cX_{p,m}^{-} } |L(1, \chi)|^{\nu},  \qquad \nu =1,2, \ldots,
\]
where $L(s, \chi) $ is the Dirichlet $L$-function associated with a multiplicative character $\chi \in \cX_p$.

By~\cite[Chapter~4]{Wash}, there is a close connection between the second moment  $M_2^{-}(p, m) $ and the
relative class number of the imaginary subfield of the cyclotomic field $\Q\(\exp(2 \pi i/p)\)$ of  degree $m$ over $\Q$.  
In turn,  by a result of Louboutin~\cite[Theorem~2]{Lou1},  we have 
\begin{equation}
\label{eq:M & D}
M_{2}^{-}(p, m )  = \frac{\pi^2}{6}\(1 - \frac{3}{p} + \frac{2}{p^2} +  \frac{2}{p} S(p, m)\), 
\end{equation}
where 
\[
S(p,m) = \sum_{\substack{\lambda\in \cG_m\\\lambda \ne 1}} \ss(\lambda,p).
\]

Hence the relation~\eqref{eq:M & D} and the above connection to the class number motivates studying the size of $\ss(\lambda,p)$ on elements $\lambda \in \F_p^*$ of given order as well as the size on average over a subgroup of $\F_p^*$.  Moreover, using asymptotic  formulas for $M_2^{-}(p, m )$
 from~\cite{MuSh}, one immediately  obtains upper bounds for $ S(p, m)$. Here we derive several 
 new results which go beyond this link.

\subsection{Summary of  our results and methods}  
We first improve a pointwise bound from~\cite{LoMu2} on Dedekind sums $\ss(\lambda,p)$,  provided that $\lambda$ 
is of small multiplicative order in $\F_p^*$, see  Corollary~\ref{cor:D-single} below. It could be in principle used  to improve the error term in the asymptotic formula for $M_2^{-}(p, m )$, provided that $m$ is not too small. However, the result is now superseded by a much more general result of the second and third authors~\cite[Theorem~2.1]{MuSh} obtained via different methods. It may have other applications, though.
 Under the assumption of the infinitude  
of {\it Mersenne primes\/}, the ranges where these results provide non-trivial estimates are optimal, see Remark~\ref{rem:limit}.

Next, we generalise the formula~\eqref{eq:M & D} to higher moments relating $M_{2k}^-(p,m)$ to certain correlations between Dedekind sums (see, for example, Theorem~\ref{thm-formulahigher} below), and use the results of~\cite{MuSh} to get information on these sums. 

 Our methods involve  the connection between Dedekind sums and {\it continued fractions\/} 
established independently by Barkan~\cite{Bar} and Hickerson~\cite{Hick} (see also a more general result of Knuth~\cite{Knuth}). 

\subsection{Notation and conventions}

We adopt the Vinogradov notation $\ll$,  that is,
\[A\ll B~\Longleftrightarrow~B\gg A~\Longleftrightarrow ~A=O(B)~\Longleftrightarrow~|A|\le cB\]
for some constant $c>0$ which sometimes, where obvious, may depend on the 
integer parameter $k\ge 1$, and is absolute otherwise. 

For a finite set $\cS$, we use $\# \cS$ to denote its cardinality. 
We also write $\|x\|$ to denote the distance from a real $x$ to the closest integer and, 
as usual, $\varphi(d)$ denotes the Euler function.

\section{Bounds on Dedekind sums} 

\subsection{Pointwise bounds on Dedekind sums}
We are now ready to present an improvement of~\cite[Theorem~3.1]{LoMu2}, 
which gives the bound  
\begin{equation}\label{LMbound} \ss(\lambda,p)  \ll p^{1-1/\varphi(d)} (\log p)^2 \end{equation}
provided that  $\lambda$ is of order $d\geq 3$ in $\F_p^*$. 
Here we remove the factor  $(\log p)^2$, which gives an optimal range with respect to $d$ where 
one can hope for getting a non-trivial bound on $\ss(\lambda,p)$, see Remark~\ref{rem:limit}.

We start with the following result about continued fractions of rationals with prime denominators, 
which might be of independent interest.

\begin{thm}
\label{thm:CF}  
Let $p$ be a prime, and assume that $\lambda \in \F_p^*$ has multiplicative order $d \ge 3$ in $\F_p^*$. The continued fraction expansion $\{ \lambda /p \}=[0;a_1,\ldots, a_n]$ satisfies
\[ \sum_{i=1}^n a_i \le 13.44 p^{1-1/\varphi (d)} . \]
\end{thm}

Combining Theorem~\ref{thm:CF}  with Lemma~\ref{lem:D and CF 2}
below, we immediately derive 
the following bound,  which establishes~\cite[Conjecture~7.1]{LoMu2}. 

\begin{cor}
\label{cor:D-single}  
 Let $p$ be a prime, and assume that $\lambda \in \mathbb{F}_p^*$ has multiplicative order $d \ge 3$ in $\F_p^*$. Then
\[ |\ss(\lambda, p)| \le 1.12 p^{1-1/\varphi (d)} . \]
\end{cor}

\begin{rem} 
\label{rem:limit}  
A quick computation shows that Theorem~\ref{thm:CF} and Corollary~\ref{cor:D-single} are non-trivial (in other words, $\sum_{i=1}^n a_i = o(p)$ and $\ss(\lambda,p) = o(p)$) in the full range $\varphi(d)=o\left(\log p\right)$, while~\eqref{LMbound} is only non-trivial in the range $\varphi(d) \ll \log p / \log \log p$. Moreover, if $p=2^d -1$ is a Mersenne prime, then 
by~\cite[Equation~(7.1)]{LoMu2} we have $\ss(2, 2^d-1) \sim p/24$,  showing that 
$\ss(\lambda, p) = o(p)$, and thus by  Lemma~\ref{lem:D and CF 2} we see that  $\sum_{i=1}^n a_i = o(p)$,  
cannot hold in the range $\varphi(d) \asymp \log p$.
\end{rem}

\subsection{Correlation of Dedekind sums}

We first generalise the formula~\eqref{eq:M & D} to higher moments, relating $M_{2k}^-(p,m)$ to certain correlations between Dedekind sums.  

\begin{thm}\label{thm-formulahigher}
Let $m\mid p-1$ be even.  Then, 
\[
M_{2k}^-(p,m)   = \frac{2\pi^{2k}}{p^k}  \sum_{\lambda \in \cG_m}  \sum_{t_1,\ldots ,t_{k-1}=1}^{p-1} \ss(t_1,p) \cdots  \ss(t_{k-1},p)  \ss\left(\lambda t_1\cdots  t_{k-1},p\right).
\]
\end{thm}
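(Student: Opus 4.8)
The plan is to mimic the proof of Theorem~\ref{thm:M4-CorrDedeking} (the case $k=2$), expanding $|L(1,\chi)|^{2k}$ via the classical closed formula for $L(1,\chi)$ of an odd character in terms of Dedekind sums, and then using orthogonality over the subgroup $\cX_{p,m}^-$. Concretely, for an odd character $\chi$ modulo $p$ one has the well-known identity $L(1,\chi) = \frac{\pi i}{p}\sum_{a=1}^{p-1}\chi(a)\, a$ (equivalently, up to normalisation, a Gauss-sum-weighted expression), so that $|L(1,\chi)|^{2k}$ expands as a $2k$-fold sum over $a_1,\dots,a_k,b_1,\dots,b_k \in \F_p^*$ of $\chi(a_1\cdots a_k)\overline{\chi}(b_1\cdots b_k)$ times a product of the arithmetic weights $\langle a_i/p\rangle$-type factors. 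After summing over $\chi \in \cX_{p,m}^-$, the character orthogonality on the subgroup forces $a_1\cdots a_k \equiv \pm\, b_1\cdots b_k \pmod{\cG_m}$ — i.e. the ratio lies in the index-$m$ subgroup $\cG_m$ (the $\pm$ coming from the odd/even split), and the sign is pinned down by $\chi(-1)=-1$. This collapses the $2k$-fold sum over residues into a sum over $\lambda \in \cG_m$ of a $(2k-1)$-fold convolution, which is exactly a product of $k-1$ copies of $\ss(t_i,p)$ times a twisted factor $\ss(\lambda t_1\cdots t_{k-1},p)$.

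The key steps, in order, are: (i) recall and normalise the finite formula $L(1,\chi)=\frac{\pi i \tau(\chi)}{p}\sum_{a}\overline{\chi}(a)\langle a/p\rangle$ or its variant, being careful about the Gauss sum $\tau(\chi)$ of modulus $\sqrt{p}$ so the $p^k$ in the denominator and the $\pi^{2k}$ come out with the stated constants; (ii) write $|L(1,\chi)|^{2k}$ as a product of $k$ copies of $L(1,\chi)$ and $k$ copies of $\overline{L(1,\chi)}$, open all $2k$ sums, and collect the Gauss-sum factors — since $|\tau(\chi)|^2=p$, the $k$ pairs of $\tau(\chi)\overline{\tau(\chi)}$ contribute $p^k$ which cancels against part of the $p^{2k}$ from the $k$-fold $(\pi i/p)$ and its conjugate, leaving $\pi^{2k}/p^k$; (iii) sum over $\chi\in\cX_{p,m}^-$, using $\sum_{\chi\in\cX_{p,m}}\chi(x) = \frac{p-1}{m}\1_{x\in\cG_m}$ together with the odd-character projector $\frac12(1-\chi(-1))$, to reduce to the condition that the product of the "numerator" variables over the product of the "denominator" variables lies in $\cG_m$; (iv) change variables to parametrise solutions of that multiplicative constraint, writing $k-1$ of the ratios freely as $t_1,\dots,t_{k-1}$ and the last in terms of $\lambda\in\cG_m$ and $t_1\cdots t_{k-1}$, and recognise each resulting inner sum over a free residue variable as a Dedekind sum $\ss(t_i,p)$ (after using $\sum_{a}\langle a/p\rangle\langle at/p\rangle = \ss(t,p)$ and the evenness/oddness symmetries of $\langle\cdot\rangle$); (v) verify the leading constant and the factor of $2$ (coming from $2/m$ in the definition of $M_{2k}$ against the $(p-1)/m$ from orthogonality, with a $2$ surviving) by cross-checking against the already-proved case $k=2$ in Theorem~\ref{thm:M4-CorrDedeking}.

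The main obstacle I expect is bookkeeping rather than conceptual: carrying the Gauss sums $\tau(\chi)$ through a $2k$-fold product and confirming that all of them pair off cleanly (they do, because we take equal numbers of $L$ and $\overline{L}$ factors, and $\tau(\chi)\tau(\overline\chi)=\chi(-1)p = -p$ on odd characters — the sign here must be tracked and absorbed correctly), and then matching the final normalising constant $2\pi^{2k}/p^k$ exactly. A secondary subtlety is the reindexing in step~(iv): one must check that as the free variables $t_1,\dots,t_{k-1}$ and $\lambda\in\cG_m$ range over their sets, each solution of the constraint $a_1\cdots a_k \equiv -\, b_1\cdots b_k \pmod{\cG_m}$ is counted with the correct multiplicity and that no Jacobian-type factor appears (it should not, since everything is a bijective linear change over $\F_p^*$). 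Once the $k=2$ case is reproduced as a sanity check, the general $k$ follows by the identical manipulation, so I would present the argument for general $k$ directly and remark that $k=2$ recovers Theorem~\ref{thm:M4-CorrDedeking}.
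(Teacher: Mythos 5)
Your proposal is correct and follows essentially the same route as the paper: the paper starts from Walum's identity $\sum_{r,t}\<\frac{r}{p}\>\<\frac{t}{p}\>\overline{\chi}(r)\chi(t)=\frac{p}{\pi^2}|L(1,\chi)|^2$ for odd $\chi$ (and $0$ for even $\chi$, which lets one extend the character sum from $\cX_{p,m}^-$ to all of $\cX_{p,m}$ without a projector), which is exactly your Gauss-sum expansion with the $|\tau(\chi)|^2=p$ cancellation already carried out, and then applies subgroup orthogonality and the substitution $y_i=x_it_i$ to produce the product of Dedekind sums. The bookkeeping you flag as the main obstacle is thus absorbed into the quoted identity, and the rest of your outline matches the paper's argument step for step.
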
 

It may be useful to compare this when $\cG_m=\left\{1\right\}$ with a result of Alkan~\cite[Theorem~2]{Alk} on moments of  $L(1,\chi)$ (rather than their absolute values) modulo a composite $q$. See also \cite{BR,Zag} for works in the same spirit. Let $\tau_k$ denote the $k$-fold divisor function, that is,
\[
\tau_k(n)=\sum_{\substack{n_1\cdots n_k=n \\ n_i \geq 1}} 1.
\] 
As usual, for $k=2$ we write $\tau= \tau_2$ for the usual divisor function. 

Let us recall that by~\cite[Theorem~2.1]{MuSh} we know that for any $\kappa <1$ we have 
\begin{equation}\label{asymp_Mk}
M_{2k}^-(p,m)  = a(k) + O(p^{-\kappa/\varphi(d)} + p^{-1/4+o(1)}),
\end{equation}
 where 
 \[a(k)=\sum_{n=1}^{+\infty} \frac{\tau_{k}^2(n)}{n^2}.\]

Combining Theorem~\ref{thm-formulahigher} with~\eqref{asymp_Mk}  in the trivial case of $m = p-1$, that is, for $\cG_m=\left\{1\right\}$,  implies 
\begin{equation}
\begin{split}
\label{eq:asymp_Dedekind}
   \sum_{t_1,\ldots ,t_{k-1}=1}^{p-1} \ss(t_1,p) \cdots  \ss(t_{k-1},p)   & \ss\left(t_1\cdots  t_{k-1},p\right)  \\
&  \qquad \qquad = \(\frac{a(k)}{2\pi^{2k}} +o(1)\) p^k.
 \end{split}
\end{equation}

In the general case, Theorem~\ref{thm-formulahigher} and the relations~\eqref{asymp_Mk} 
and~\eqref{eq:asymp_Dedekind}, yield the following result.

 \begin{cor}\label{cor:saving}
 Let $\cG_m\subseteq \F_p^*$ be  of index $m$ and  of odd order $d =(p-1)/m$ such that $\varphi(d)=o(\log p)$. Then,
\[
\sum_{\substack{\lambda \in \cG_m \\ \lambda \neq 1}}  \sum_{t_1,\ldots ,t_{k-1}=1}^{p-1} \ss(t_1,p) \cdots  \ss(t_{k-1},p)  \ss\left(\lambda  t_1\cdots  t_{k-1},p\right) = o(p^k).
\]
\end{cor}

In fact, we see from~\eqref{asymp_Mk} that the bound of Corollary~\ref{cor:saving} is of magnitude $O(p^{k-\kappa/\varphi(d)}+p^{k-1/4+o(1)})$ with any $\kappa <1$.

In the case $k=2$, it is possible to give an independent proof  of Corollary~\ref{cor:saving} using the pointwise bound of Corollary~\ref{cor:D-double}
below  on correlations, which might be of independent interest. 
As in the  case of $k=1$, we start with a result on the partial quotients of the continued fraction expansions of two multiplicatively related rational numbers. 

\begin{thm}\label{thm:CF-2} 
Let $p$ be a prime and $\lambda, t \in \F_p^*$, and assume that $\lambda$ has multiplicative order $d \ge 3$ in $\mathbb{F}_p^*$. Then the continued fraction expansions $\{ \lambda t/p \} = [0;a_1, \ldots, a_n]$ and $\{ t/p \} = [0;b_1, \ldots, b_{\ell}]$ satisfy
\[ \left( \sum_{i=1}^n a_i \right) \left( \sum_{j=1}^{\ell} b_j \right) \le 26.88 p^{2-1/\varphi (d)} . \]
\end{thm}

As before, combining Theorem~\ref{thm:CF-2} with 
 Lemma~\ref{lem:D and CF 2}
 below leads to the following bound.

%

\begin{cor}
\label{cor:D-double}  
 Let $p$ be a prime and $\lambda, t \in \F_p^*$, and assume that $\lambda$ has multiplicative order $d \ge 3$ in $\F_p^*$. Then
\[ |\ss(\lambda t , p) \ss(t,p)| < \frac{3}{16} p^{2-1/\varphi (d)} . \]
\end{cor}

\begin{rem} The bounds of Theorem~\ref{thm:CF-2} and Corollary~\ref{cor:D-double} are non-trivial (in other words, $\sum_{i=1}^n a_i \sum_{j=1}^{\ell} b_j = o(p^2)$ and $\ss(\lambda t,p)\ss(t ,p) = o(p^2)$) in the range $\varphi(d)=o(\log p)$. \end{rem}

Next, for integers $k_1,k_2\bmod p$, we define the following averaged correlations of Dedekind sums
\[
\mathcal{S}_{k_1,k_2}(p)=\sum_{t=1}^{p-1}  \ss(k_1 t,p)\ss(k_2t,p).
\]
A direct application of the Cauchy--Schwarz inequality shows that 
\begin{equation}\label{eq:triv corr}  
\left\vert \mathcal{S}_{k_1,k_2}(p) \right\vert\leq \sum_{t=1}^{p-1}  \ss( t,p)^2 \ll p^2, 
\end{equation}
where we have used that a special case of~\cite[Theorem~1]{Zhang} gives the asymptotic formula 
\begin{equation}\label{eq:asymp s2}  
 \sum_{t=1}^{p-1}  \ss( t,p)^2  = \frac{5}{144}p^2 + O\(p^{1+o(1)}\).
\end{equation}
By  Theorem~\ref{thm-formulahigher} (with $k=2$) and~\eqref{eq:asymp s2}   we have 
\begin{align*}
M_{4}^-(p,m)  & =\frac{2\pi^4}{p^2}   \sum_{\lambda \in \cG_m}  \mathcal{S}_{\lambda,1}(p)\\
&=     \frac{5}{144}p^2 +   \frac{2\pi^4}{p^2}   \sum_{ \substack{\lambda \in \cG_m \\ \lambda \neq 1}}  \mathcal{S}_{\lambda,1}(p) + O\(p^{1+o(1)}\) .
\end{align*}     Corollary~\ref{cor:saving} shows that the Dedekind sums $\ss(\lambda t,p)$ and $\ss(t,p)$ do not correlate in a strong form when averaged over both $t \in \F_p^*$ and $\lambda \in \cG_m \setminus \{1\}$. Indeed, Corollary~\ref{cor:saving} improves the trivial bound  $dp^2$ by~\eqref{eq:triv corr} for $ \mathcal{S}_{\lambda,1}(p)$ on average over  $\lambda \in \cG_m \setminus \{1\}$, and gives  
\[
\sum_{\substack{\lambda \in \cG_m \\ \lambda \neq 1}} \mathcal{S}_{\lambda,1}(p)
 \ll p^{2-1/(2\varphi(d))} + p^{7/4+o(1)}.
\]  

On the other hand, we prove that for given integers $k_1,k_2\ge 1$ which are 
not too large, the Dedekind sums $\ss(k_1 t,p)$ and $\ss(k_2 t,p)$ have a large correlation when averaged over $t$ modulo $p$, thus the trivial bound~\eqref{eq:triv corr} is the best possible. 
This is directly related to the twisted fourth moment of $L$-functions over the full group of characters (see the formula~\eqref{eq:M4- Asymp} below).

\begin{thm}
\label{thm:CorrCorr} 
For any fixed $\varepsilon>0$ and arbitrary coprime integers $k_1\ge k_2\ge 1$ with 
$
k_1^3 k_2^2 \le p^{1-\varepsilon}
$
we have,  as $p\to \infty$,
\[ \mathcal{S}_{k_1,k_2}(p) =  \(1+o(1)\) \frac{p^2}{\pi^4}  
\sum_{n=1}^{\infty}
\frac{\tau(k_1n)\tau(k_2n)}{k_1k_2 n^2}. \]
 \end{thm}

\section{Preliminaries}

\subsection{Dedekind sums and continued fractions}
We recall that by a result of Barkan~\cite[Equation~(13)]{Bar} and  Hickerson~\cite[Theorem~1]{Hick} we have 
the following connection between Dedekind sums and continued fractions. 

\begin{lemma}
\label{lem:D and CF}
Let $1 \le a < b$ be integers, and let $a/b=[0;a_1, \ldots, a_n]$ be the continued fraction expansion with $a_n>1$. Then
\[ \ss(a,b) = \frac{(-1)^n -1}{8} + \frac{1}{12} \left( \frac{a}{b} + (-1)^{n+1} [0;a_n, \ldots, a_1] + \sum_{i=1}^n (-1)^{i+1} a_i \right) . \]
\end{lemma}  

In fact, we only need the following upper bound.

\begin{lem}
\label{lem:D and CF 2}
Let $1 \le a < b$ be integers, and let $a/b=[0;a_1, \ldots, a_n]$ be the continued fraction expansion with $a_n>1$. Then
$$
|\ss(a,b)| \le \frac{1}{12} \sum_{i=1}^n a_i .
$$
\end{lem}

\begin{proof} Note that $a/b$ and $[0;a_n,\ldots, a_1]$ both lie in the interval $[0,1]$. For even $n$, Lemma \ref{lem:D and CF} gives
\[ \begin{split} \ss(a,b) &= \frac{1}{12} \left( \frac{a}{b} - [0;a_n,\ldots, a_1] + \sum_{i=1}^n (-1)^{i+1} a_i \right) \\ &\le \frac{1}{12} \left( a_1 + a_3 + \cdots + a_{n-1} \right) , \end{split} \]
and
\[ \ss(a,b) \ge \frac{1}{12} \left( -a_2 -a_4- \cdots - a_n \right) , \]
which immediately imply the claim. For odd $n \ge 3$, we similarly 
deduce from Lemma~\ref{lem:D and CF} that
\[ \begin{split} \ss(a,b) &= - \frac{1}{4} + \frac{1}{12} \left( \frac{a}{b} + [0;a_n, \ldots, a_1] + \sum_{i=1}^n (-1)^{i+1} a_i \right) \\ &\le \frac{1}{12} \left( a_1 + a_3 + \cdots + a_n \right) , \end{split} \]
and
\[ \begin{split} \ss(a,b) &\ge - \frac{1}{4} + \frac{1}{12} \left( a_1 + a_3 -a_2-a_4-\cdots -a_{n-1} \right) \\ &\ge \frac{1}{12} \left( -1-a_2-a_4-\cdots -a_{n-1} \right) , \end{split} \]
and the claim follows. Finally, for $n=1$ the assumption $a_1 \ge 2$ implies
\[ \ss (a,b) = - \frac{1}{4} + \frac{1}{12} \left( \frac{1}{a_1} + \frac{1}{a_1} +a_1 \right) \in \left[ 0, \frac{a_1}{12} \right] , \]
as claimed.
\end{proof}

\subsection{Small solutions to linear congruences}

We need the following result.  A similar estimate has been shown in the proof of~\cite[Lemma~4.6]{LoMu1}, but here we give a simple proof which also produces better numerical constants.

\begin{lemma}
\label{lem:SmallSols}
Assume that $\lambda$ has multiplicative order $d \ge 3$ in $\mathbb{F}_p^*$. Then,
 \[
\min\{ |m|+|h|:~ h \lambda \equiv m \pmod p, \ (h, m) \ne (0,0)\} \ge  p^{1/\varphi (d)}. 
\]
\end{lemma}  

\begin{proof} Clearly $\Phi_d(\lambda) = 0$ (in $\F_p$), where 
$\Phi_d(X) \in \Z[X]$ is the $d$-th cyclotomic polynomial. 

We now fix  an integer pair $(h, m) \ne (0,0)$ with $ h \lambda \equiv m \pmod p$. 
Since $d \ge 3$,  the polynomial $\Phi_d$ has no rational roots (as all its roots are roots 
of unity of order $d$).  Therefore, $A = h^{\varphi(d)} \Phi_d(m/h)$ is a non-zero integer.

 On the other hand, 
\[
A = h^{\varphi(d)} \Phi_d(m/h)  \equiv h^{\varphi(d)} \Phi_d(\lambda) \equiv 0 \pmod p.
\]
Hence
\[
p \le |A| = \prod_{\substack{j=1\\\gcd(j,d)=1}}^d \left| m - h e^{2 \pi i j/d} \right| \le \(|m|+|h|\)^{\varphi(d)} ,
\]
which concludes the proof. 
\end{proof} 

 \section{Proofs of results on Dedekind sums}
 
\subsection{Proof of Theorem~\ref{thm:CF}}
Let $1 \le h \le p-1$ be any integer, and let $h \lambda \equiv m \pmod{p}$ with $-(p-1)/2 < m \le (p-1)/2$. 
Then $\left\| h \lambda /p \right\| = |m|/p$. By Lemma~\ref{lem:SmallSols}, we have
\[ p^{1/\varphi (d)} \le |h| + |m| = |h| + p \left\| \frac{h \lambda}{p} \right\| . \]
We can certainly assume  that $1 \le \lambda \le p-1$.
Let $p_i/q_i=[0;a_1, \ldots, a_i]$ denote the convergents to $ \lambda /p = [0;a_1, \ldots, a_n]$. 
From the general properties of continued fractions, see, for 
example,~\cite[Theorem~164]{HaWr}, we have 
\[
\left| \frac{\lambda}{p} - \frac{p_{i-1}}{q_{i-1}}\right| \le 
\frac{1}{q_{i-1} q_i } = \frac{1}{q_{i-1} \(a_iq_{i-1} + q_{i-2}\)} \le \frac{1}{a_iq_{i-1}^2} ,
\] 
hence $\| q_{i-1} \lambda /p \| \le 1/(a_i q_{i-1})$. In particular, for any $1 \le i \le n$,
\begin{equation}
\label{eq:aiqi}
p^{1/\varphi (d)} \le q_{i-1} + p \left\| \frac{q_{i-1} \lambda}{p} \right\| \le q_{i-1} + \frac{p}{a_i q_{i-1}} .
\end{equation}

Consider the following two cases. If $q_{i-1} \le p^{1/\varphi (d)}/2$, then~\eqref{eq:aiqi} implies that 
\[a_i \le \frac{2 p^{1-1/\varphi (d)}}{q_{i-1}} .
\] 
If $q_{i-1} > p^{1/\varphi (d)}/2$, then 
\[
a_i \le q_i / q_{i-1} \le p/q_{i-1}.
\] 
Therefore
\begin{align*}
 \sum_{i=1}^n a_i &= \sum_{\substack{i=1 \\ q_{i-1} \le p^{1/\varphi (d)} / 2}}^n a_i + \sum_{\substack{i=1 \\ q_{i-1} > p^{1/\varphi (d)} / 2}}^n a_i \\
 &\le \sum_{i=1}^n \frac{2p^{1-1/\varphi (d)}}{q_{i-1}} + \sum_{\substack{i=1 \\ q_{i-1} > p^{1/\varphi (d)} / 2}}^n \frac{p}{q_{i-1}}   .
  \end{align*}
Using the inequality
\[
q_{j+i-1} \ge q_j F_i,
\]
where $F_i$ are the Fibonacci numbers, which is immediate from the identity $q_{i+2} = a_{i+2}q_{i+1}+q_i$, we derive
\[ \sum_{i=1}^n a_i \le \left( 4 \sum_{i=1}^{\infty} \frac{1}{F_i} \right) p^{1-1/\varphi(d)} = 13.4395\ldots p^{1-1/\varphi(d)} , \]
which concludes the proof.

 \subsection{Proof of Theorem~\ref{thm-formulahigher}}
We recall the formula from~\cite[Equation~(3)]{Wal}
\begin{equation}\label{eq:ortho}  \sum_{r,t=1}^{p-1} \<\frac{r}{p}\>  \<\frac{t}{p}\> \overline{\chi}(r)\chi(t) = \begin{cases} \frac{p}{\pi^2} \vert L(1,\chi)\vert^2, & \text{ if } \chi(-1)=-1, \\
0, & \text{ otherwise}. \end{cases} \end{equation}

Summing over characters and using orthogonality relations, we derive
\begin{align*}
\frac{p^{k}}{\pi^{2k}}  \sum_{\chi\in   \cX_{p,m}^{-}}&\vert L(1,\chi)\vert^{2k} \\
& =   \sum_{\chi\in  \cX_{p,m}} \sum_{\substack{x_1,\ldots ,x_k=1 \\ y_1,\ldots ,y_k=1}}^{p-1} \prod_{i=1}^{k} \<\frac{x_i}{p}\>  \<\frac{y_i}{p}\> 
 \chi(x_i)\overline{\chi}(y_i) \\
& =   \#  \cX_{p,m} \sum_{\lambda \in \cG_m}
 \sum_{\substack{x_1,\ldots ,x_k=1 \\ y_1,\ldots ,y_k=1\\
 x_1\cdots x_k  \equiv \lambda y_1\cdots y_k \bmod p }}^{p-1} 
 \prod_{i=1}^{k}\<\frac{x_i}{p}\>  \<\frac{y_i}{p}\>.
\end{align*} 
For any $1 \leq i \leq k-1$, we make the change of variables $y_i = x_i t_i$. Hence, for every 
$\lambda \in \cG_m$  we obtain
\begin{align*}
 & \sum_{\substack{x_1,\ldots ,x_k=1 \\ y_1,\ldots ,y_k=1\\
 x_1\cdots x_k  \equiv \lambda y_1\cdots y_k \bmod p }}^{p-1}  \prod_{i=1}^{k} \<\frac{x_i}{p}\>  \<\frac{y_i}{p}\>  \\
 & \qquad = \sum_{\substack{x_1,\ldots,  x_{k} =1\\ t_1, \ldots ,t_{k-1},y_k=1\\ x_k \equiv \lambda t_1 \cdots t_{k-1} y_k \bmod p}}^{p-1}  \<\frac{y_k}{p}\>  \<\frac{x_k}{p}\>\prod_{i=1}^{k-1} \<\frac{x_i}{p}\>  \<\frac{x_i t_i}{p}\> \\
 & \qquad  = \sum_{\substack{x_1,\ldots,  x_{k-1} =1\\ t_1, \ldots ,t_{k-1},y_k=1}}^{p-1}  \<\frac{y_k}{p}\>  \<\frac{ \lambda t_1 \cdots t_{k-1} y_k}{p}\>\prod_{i=1}^{k-1} \<\frac{x_i}{p}\>  \<\frac{x_i t_i}{p}\> \\
 & \qquad =    \sum_{t_1,\ldots ,t_{k-1}=1}^{p-1} \ss(t_1,p) \cdots  \ss(t_{k-1},p)  \ss\left(\lambda  t_1\cdots  t_{k-1},p\right)  .
\end{align*} 
Thus, in accordance with~\eqref{eq:M & D},  we derive
  \begin{align*}
& \frac{p^{k}}{\pi^{2k}}  \sum_{\chi\in  \cX_{p,m}^{-}}\vert L(1,\chi)\vert^{2k}  \\
 &\quad =  \#  \cX_{p,m} \sum_{\lambda \in \cG_m}  \sum_{t_1,\ldots ,t_{k-1}=1}^{p-1} \ss(t_1,p) \cdots  \ss(t_{k-1},p)  \ss\left(\lambda t_1\cdots  t_{k-1},p\right) , 
  \end{align*}
 which  finishes the proof of Theorem~\ref{thm-formulahigher}.

\subsection{Proof of Theorem~\ref{thm:CF-2}}
\label{sec:CorrDed} 
 Let $h_1, h_2 \in [1,p-1]$ be any integers, and let $h_1 \lambda t \equiv m_1 \pmod{p}$ and $h_2 t \equiv m_2 \pmod{p}$ with $m_1, m_2 \in (-(p-1)/2, (p-1)/2]$. Then $\| h_1 \lambda t /p \| = |m_1|/p$ and $\| h_2 t /p \| = |m_2|/p$. Observe that $h_2 m_1 - \lambda h_1 m_2 \equiv 0 \pmod{p}$. Therefore by Lemma \ref{lem:SmallSols},
\[ p^{1/\varphi(d)} \le  |h_2 m_1| + |h_1 m_2| = |h_2| p \left\| \frac{h_1 \lambda t}{p} \right\| + |h_1| p \left\| \frac{h_2 t}{p} \right\| . \]
Let $p_i/q_i=[0;a_1,\ldots, a_i]$ denote the convergents to $\{ \lambda t/p \}$, and $\widetilde{p}_j / \widetilde{q}_j = [0;b_1, \ldots, b_j]$ the convergents to $\{ t/p \}$. In particular, for any $1 \le i \le n$ and $1 \le j \le \ell$, we derive
\[ p^{1/\varphi(d)} \le \widetilde{q}_{j-1} p \left\| \frac{q_{i-1} \lambda t}{p} \right\| + q_{i-1} p \left\| \frac{\widetilde{q}_{j-1} t}{p} \right\| \le 2 p \max \left\{ \frac{\widetilde{q}_{j-1}}{a_i q_{i-1}} , \, \frac{q_{i-1}}{b_j \widetilde{q}_{j-1}} \right\} . \]
Taking the reciprocals yields
\[ 2 p^{1-1/\varphi (d)} \ge \min \left\{ \frac{a_i q_{i-1}}{\widetilde{q}_{j-1}} ,\,  \frac{b_j \widetilde{q}_{j-1}}{q_{i-1}} \right\} . \]
Letting 
\[\cH = \left\{ (i,j) \in [1,n] \times [1,\ell] :~\frac{a_i q_{i-1}}{\widetilde{q}_{j-1}} \le \frac{b_j \widetilde{q}_{j-1}}{q_{i-1}} \right\},
\] 
we have
  \begin{align*}  \sum_{i=1}^n a_i   \sum_{j=1}^{\ell} b_j  
&= \sum_{(i,j) \in \cH} a_i b_j + \sum_{(i,j) \not\in \cH} a_i b_j \\ &\le \sum_{(i,j) \in \cH} 2 p^{1-1/\varphi (d)} \frac{\widetilde{q}_{j-1}}{q_{i-1}} b_j + \sum_{(i,j) \not\in \cH} a_i 2 p^{1-1/\varphi (d)} \frac{q_{i-1}}{\widetilde{q}_{j-1}} \\ &\le 2 p^{1-1/\varphi (d)} \left( \sum_{i=1}^n \frac{1}{q_{i-1}} \sum_{j=1}^{\ell} b_j \widetilde{q}_{j-1} + \sum_{i=1}^n a_i q_{i-1} \sum_{j=1}^{\ell} \frac{1}{\widetilde{q}_{j-1}} \right)  . 
 \end{align*}
   Using that 
\[
\sum_{i=1}^{n} a_i q_{i-1} = q_n + q_{n-1} -1 \le 2p \quad \text{and}\quad 
\sum_{j=1}^{\ell} b_j \widetilde{q}_{j-1} = \widetilde{q}_{\ell} + \widetilde{q}_{\ell-1} -1 \le 2p, \]
we now derive 
\[  \sum_{i=1}^n a_i   \sum_{j=1}^{\ell} b_j \le 
 8  p^{2-1/\varphi (d)}  \sum_{i=1}^{\infty} \frac{1}{F_i} 
= 26.8790\ldots  p^{2-1/\varphi (d)},
\] 
where, as before, $F_i$ are the Fibonacci numbers,
which concludes the proof.

\subsection{Proof of Theorem~\ref{thm:CorrCorr}}

For any integers $k_1,k_2 \geq 1$  let us define the following twisted fourth moments
\begin{align*}
& M_4^-(p;k_1,k_2)=\frac{2}{p-1}\sum_{\chi \in \cX_p^-} \chi(k_1)\overline{\chi}(k_2) \left\vert L(1,\chi)\right\vert^4,\\
& M_{4}(p;k_1,k_2)= \frac{1}{p-1}\sum_{\chi \in \cX_p^*} 
\chi(k_1)\overline{\chi}(k_2) \left\vert L(1,\chi)\right\vert^4,
\end{align*}
where $\cX_p^* = \cX_p\setminus \{\chi_0\}$ denotes the set of all non-principal characters modulo $p$. Then,  by~\cite[Theorem~1.1]{Lee19} 
uniformly over  $k_1,k_2 \geq 1$,  we have the following asymptotic formula  
\begin{equation}\label{asymptwistedfourth} 
M_{4}(p;k_1,k_2)= \sum_{\substack{n \in \mathbb{Z} \\ {n \neq 0}}} \frac{\tau(k_1n)\tau(k_2n)}{k_1k_2 n^2}
+ O\(\sqrt{k_1+k_2} p^{-1/2 + o(1)}  \). 
\end{equation}
It follows from the proof of~\cite[Theorem~1.1]{Lee19} that $M_{4}^-(p;k_1,k_2)$ satisifies the same 
asymptotic formula~\eqref{asymptwistedfourth}  as $M_{4}(p;k_1,k_2)$. That is, 
\begin{equation}\label{eq:M4- Asymp} 
M_{4}^-(p;k_1,k_2)=\sum_{\substack{n \in \mathbb{Z} \\ {n \neq 0}}}\frac{\tau(k_1n)\tau(k_2n)}{k_1k_2 n^2} + O\(\sqrt{k_1+k_2} p^{-1/2 + o(1)}  \). 
\end{equation}

We now recall~\eqref{eq:ortho}. Hence, summing over characters, we derive, similarly as in the proof 
of Theorem~\ref{thm-formulahigher}, 
\begin{align*}
\frac{p^2}{\pi^4}   \sum_{\chi\in   \cX_{p}^{-}} &  \chi(k_1)\overline{\chi}(k_2) \left\vert L(1,\chi)\right\vert^4   \\\\
& =  (p-1) \sum_{\substack{r,s,t,u=1 \\ k_1rt \equiv k_2su  \bmod p}}^{p-1}\<\frac{r}{p}\>  \<\frac{t}{p}\> \<\frac{s}{p}\>  \<\frac{u}{p}\>.
\end{align*} 
Making the change of variables  $ t  \to k_2  st$,  we have
 \begin{align*}
 \frac{p^2}{\pi^4}   \sum_{\chi\in   \cX_{p}^{-}}& \chi(k_1)\overline{\chi}(k_2) \left\vert L(1,\chi)\right\vert^4 \\
& =   (p-1)  \sum_{\substack{r,s,t,u =1 \\ k_1 rt \equiv   u   \bmod p}}^{p-1}  \<\frac{r}{p}\>  \<\frac{s}{p}\>
 \<\frac{ k_2st }{p}\>  \<\frac{u}{p}\> \\
 & =  (p-1) \sum_{r,s,t=1}^{p-1} \<\frac{ r}{p}\>  \<\frac{s}{p}\>
 \<\frac{ k_2st }{p}\>   \<\frac{k_1rt}{p}\>. \\
\end{align*} 
Next, we change $ s  \to k_2^{-1} s$, and we derive  
 \begin{align*}
 \frac{p^2}{\pi^4} & \sum_{\chi\in   \cX_{p}^{-}} \chi(k_1)\overline{\chi}(k_2) \left\vert L(1,\chi)\right\vert^4 \\
 & = (p-1) \sum_{r,s,t=1}^{p-1} \<\frac{ r}{p}\>  \<\frac{k_2^{-1} s}{p}\>
 \<\frac{st }{p}\>   \<\frac{k_1rt }{p}\>  \\
 &  =  (p-1)\sum_{t=1}^{p-1}  \ss(k_1t ,p)\ss(k_2t,p)   .
\end{align*}

Hence, we obtain 
\[
 \frac{p^2}{\pi^4} \sum_{\chi\in \cX_p^-}\chi(k_1)\overline{\chi}(k_2) \vert L(1,\chi)\vert^4  =  (p-1)\sum_{t=1}^{p-1}  \ss(tk_1,p)\ss(tk_2,p),
\]
which together with~\eqref{eq:M4- Asymp}  and using that $k_1 \ge k_2$, 
concludes the proof.

\section{Comments}
Note that when we bound the Dedekind sum from above in terms of the sum
of partial quotients, we ignore the cancellation in the alternating sum of
partial quotients in Lemma~\ref{lem:D and CF}. However, we might speculate that there is no 
such cancellation in the extremal cases. Indeed, let us look at the aforementioned example~\eqref{eq:Bad b}.
For a fixed prime $d$, we expect $p=\frac{a^d-1}{a-1}$ to be prime for infinitely many $a$. Notice that the continued fraction expansion is $a/p=[0;a_1,a_2]$ with 
\[
a_1=\frac{a^{d-1}-1}{a-1} = a^{d-2}+\dots+a+1 \mand a_2=a.
\] 
Thus 
\[
a_1+a_2  \sim  a^{d-2} \sim p^{(d-2)/(d-1)} =p^{1-1/\varphi(d)}
\] 
and $\ss(a,p) \sim \frac{1}{12}p^{1-1/\varphi(d)}$ as $a \to \infty$ by Lemma~\ref{lem:D and CF}. In particular, the best possible constant in the upper bound in Theorem~\ref{thm:CF} is between $1$ and $13.44$. Numerical evidence in~\cite[Section~7]{LoMu2} suggests that this example is indeed extremal for Dedekind sums. We thus conjecture that the best possible constant in the upper bound in Corollary~\ref{cor:D-single} is $1/12$ instead of $1.12$.



 \section*{Acknowledgement}

The authors  would like to thank Sandro Bettin for pointing out the reference~\cite{Bettin} which may be useful to obtain an alternative proof of  Corollary~\ref{cor:saving}  for $k=2$.  The authors are also grateful to 
St{\'e}phane Louboutin for useful comments and  to the anonymous referee for pointing out a way to improve Lemma \ref{lem:SmallSols} leading to a better constant in Corollary \ref{cor:D-single}.

During the preparation of this work B.B. was supported by the Austrian Science Fund (FWF) project M 3260-N, M.M.  
 by the Ministero della Istruzione e della Ricerca 
Young Researchers Program Rita Levi Montalcini and I.S. 
 by the  
Australian Research Council Grants  DP230100530 and DP230100534.

\end{document}